\newfont{\SectionTitle}{cmcsc18}
\newtheorem{theorem}{Theorem}
\newtheorem{lemma}{Lemma}
\newcommand{\E}{{\rm ~E\,}}
\begin{document}
\title{Developments in maximum likelihood unit root tests}
\author{Ying Zhang\\
Department of Mathematics and Statistics\\
Acadia University\\
\\
Hao Yu and A. Ian McLeod \\
Department of Statistical and Actuarial Sciences \\
University of Western Ontario}
\date{December 19, 2011}
\bigskip
\maketitle
\newpage
\begin{abstract}
The exact maximum likelihood estimate (MLE) provides a test statistic for the unit root
test that is more powerful \citep[p. 577]{Fuller96} than the usual least squares approach.
In this paper a new derivation is given for the asymptotic distribution of this
test statistic that is simpler and more direct than the previous method.
The response surface regression method is used to obtain a fast algorithm that
computes accurate finite-sample critical values.
This algorithm is available in the R package {\tt mleur} that is
available on CRAN.
The empirical power of the new test is shown to be much better than the usual
test not only in the normal case but also for innovations generated
from an infinite variance stable distribution as well as for innovations
generated from a GARCH$(1,1)$ process.
\end{abstract}
{\it Keywords:\/}
Exact maximum likelihood estimator;
Response surface regression;
Robust unit root test;
Symbolic computation.

\newpage

\section{Introduction}\label{SectionIntro}

The AR$(1)$ model is widely used in many applications as well as in unit root testing.
Modern approaches to the unit root testing problem emphasize the importance of model
selection \citep{PfaffBook2006, Enders2010, Patterson2010}.
This paper focuses on testing the null model known as random walk,
\begin{equation}
\nabla z_t =  a_t, \quad t=1,2,...,
\label{NullModel}
\end{equation}
where $\nabla z_t = z_t - z_{t-1}$ and $a_t$ are independent and normally
distributed with mean zero and variance $\sigma^2_a$.
The alternative is assumed to be the stationary AR$(1)$ model with intercept term $\beta$,
\begin{equation}
z_t = \beta + \rho z_{t-1} +a_t, \quad t=1,2,...,
\label{ARONE}
\end{equation}
where $|\rho| < 1$.

Sometimes it is assumed that $\beta=0$ is {\bf known.}  This case corresponds to the zero-mean AR$(1)$ processes.
Both of these models were discussed in the original formulation of the unit root testing problem
by \citet{DickeyFuller79} but using the least-squares estimates (LSE) instead of the MLE.
The random walk model and the stationary AR$(1)$ alternative provide
a suitable family of models for many financial and economic time series.
However, as is discussed in \S \ref{Section:Application}, other methods are needed
if the diagnostic checks reveal that further lagged values need to be included
in the model.

\citet[p. 577]{Fuller96} indicates that if the objective is to test the hypothesis of a unit root against the alternative of a stationary process with an unknown mean, the test statistics associated with the exact MLE are more powerful than that with the LSE.
The exact MLE referred to is the MLE in the stationary case that corresponds to the alternative hypothesis in the
unit root test.
Empirical power comparisons among various unit root tests showed that the MLE based tests had much higher power
than the Dickey-Fuller (DF) tests \citep{Pantula1994}.
Extensions of the MLE method to the ARMA$(1,1)$ and other ARMA processes were discussed by \citet{Shin1998}.

\citet[\S 10.1.3]{Fuller96} and \citet{GF99}
derive the limiting distributions of normalized statistics associated with the exact MLE
unit root test under eqns. (\ref{NullModel}) and (\ref{ARONE}).
This approach is indirect whereas our new derivation in \S \ref{SectionComputer} is essentially simpler
and more direct.
Our method using the Taylor series linearization of the test statistic
is carried out through symbolic computer algebra.
The exact MLE itself is also derived symbolically through the
solution of a cubic equation in \S 2.
The usual approach to the exact MLE using a numerical optimization technique can occasionally have convergence problems.
This more direct approach using a symbolic Taylor series linearization
is easier to generalize to other problems as well.
It is known that computer algebra may handle complicated statistical inference problems \citep{AndStaf}.
There are several examples in time series analysis.
\citet{SmithField} show how a symbolic operator can be used to calculate the joint cumulants of the linear combinations of products of discrete Fourier transforms.
\citet{ZhangMcLeod06} discuss a computer algebra approach to the asymptotic bias and variance coefficients to order $O(1/n)$ for linear estimators in stationary time series.
Computer algebra no doubt has many more applications in statistics and time series analysis.

In \S \ref{Section:Implementation}, using response surface curves, we show that the critical values for
the MLE test may be efficiently computed.
With our fast algorithm, in \S \ref{Section:PowerComparison}, we demonstrate that the exact MLE test provides
not only a sizeable increase in power but also the robustness against alternative specifications for the innovations
such as an infinite variance stable distribution and a GARCH$(1,1)$ process.
We illustrate how to implement the exact MLE unit root test with two real world examples in \S \ref{Section:Application}.

\section{Exact MLE}\label{SectionMLE}

The AR$(1)$ model (\ref{ARONE}) may also be written,
\begin{equation}
z_t -\mu= \rho (z_{t-1}-\mu) +a_t, t=1,2,...,
\label{ARONEMU}
\end{equation}
where $E(z_t)=\mu$ and $\beta = \mu(1-\rho)$.
When $\mu$ is known, without loss of generality, it is assumed that $\mu=0$.
The time series process is stationary if $|\rho|<1$.
In the random walk case $\rho=1$ and the process is said to be unit root non-stationary.
If $\rho>1$, the process is explosively non-stationary.

Most of unit root tests have been derived under the data generation model,
\begin{equation}
z_t-\mu=\rho (z_{t-1}-\mu)+a_t, t=1,2,...,
\label{DGMOne}
\end{equation}
where $z_0 = \mu$ is a fixed value.
The only difference between model (\ref{ARONEMU}) and (\ref{DGMOne}) is the initial value.
The time series represented by (\ref{DGMOne}) is mostly same as that by (\ref{ARONEMU}), except that under (\ref{DGMOne}) the process is asymptotically stationary when $|\rho| <1$ and the LSE is the maximum likelihood estimator of $\rho$ conditionally on the initial value.

First consider the zero-mean stationary time series under (\ref{ARONEMU}).
Its initial value follows a normally distributed random variable with zero mean and a variance of $\sigma^2/(1-\rho^2)$.
The exact {\bf log-likelihood} function of $n$ consecutive observations, $z_t, t=1,...,n$, may be written as \citep{MinAzz93}
\begin{equation}
l(\sigma^2, \rho) = -{n\over 2}\log(2\pi)-{n\over 2}
\log(\sigma^2)+{1\over 2} \log(1-\rho^2)-{1\over 2\sigma^2}(a-2\rho
b+\rho^2c)
\label{ExactLikelihood}
\end{equation}
where
\begin{equation}
a = \sum \limits_{t=1}^n {z_t}^2,\ \
b = \sum\limits_{t=2}^nz_tz_{t-1},\ \
c = \sum\limits_{t=2}^{n-1}{z_t}^2.
\label{ABC}
\end{equation}
Maximizing $l(\sigma^2, \rho)$ in eqn.(\ref{ExactLikelihood}), \citet{White61}, and \citet{MinAzz93} show that the exact
MLE of $\rho$ is the unique real root of the following equation, whose absolute value is less than one.
\begin{equation}
{{n-1}\over n} c \rho^3-{{n-2}\over n} b \rho^2-(c+{a\over n})\rho+b=0.
\label{CubicMLE}
\end{equation}
\citet{DentMin78}, \citet{Hasza80}, and \citet{MinAzz93} point out that the exact MLE may be written as
\begin{equation}
\hat {\rho}=2 ({{{d_2}^2-3d_1}\over 9})^{1/2} \cos ({\theta \over
3}+{4\pi \over 3})-{d_2\over 3},
\label{RhoHat}
\end{equation}
where
$$
\theta={\cos^{-1}\{{{9d_2d_1-27d_0-2{d_2}^3}\over {2({d_2}^2-3d_1)^{3/2}}}}\},
$$
and
$$
d_2 = -{{n-2}\over {n-1}}{b\over c},\ \
d_1 = -{n\over {n-1}}(1+{a\over{nc}}),\ \
d_0 = {n\over {n-1}} {b\over c}.
$$
Using {\it Mathematica\/} \citep{Math}, the cubic equation (\ref{CubicMLE}) is easily solved
and the exact MLE $\hat \rho$ may be expressed as the ratio of complex polynomials,
\begin{eqnarray*}
\hat\rho=& &{(n-2)b}/3c (-1+n)+((1-i\sqrt{3})\\
& &(-b^2(-2+n)^2+3c(-1+n)(-a-c n)))/(32^{2/3}c(1-n)\\
& &(16b^3-18abc-24b^3n+27abcn+9bc^2n+12b^3n^2-9abcn^2\\
& &-27bc^2n^2-2b^3n^3+18bc^2n^3+(((16b^3-18abc-24b^3n+27abcn\\
& &+9bc^2n+12b^3n^2-9abcn^2-27bc^2n^2-2b^3n^3+18bc^2n^3)^2+4(-b^2(-2+n)^2\\
& &+3c(-1+n)(-a-c n))^3))^{1/2})^{1/3})-1/(6 2^{1/3}c(1-n))\\
& &((1+i \sqrt{3})(16b^3-18abc-24b^3n+27abcn+9bc^2n+12b^3n^2\\
& &-9abcn^2-27bc^2n^2-2b^3n^3+18bc^2n^3+(((16b^3-18abc-\\
& &-24b^3n+27abcn+9bc^2n+12b^3n^2-9abcn^2-27bc^2n^2-2b^3n^3\\
& &+18bc^2n^3)^2+4(-b^2(-2+n)^2+3c(-1+n)(-a-c n))^3))^{1/2})^{1/3})
\end{eqnarray*}
where $i=\sqrt {-1}$, and $a$, $b$ and $c$ are defined in (\ref{ABC}).

For a stationary AR(1) process with an unknown mean under (\ref{ARONEMU}), there are two mean correction methods: sample mean correction and the maximum likelihood mean estimation.
It is well known that for ARMA$(p,q)$ model, the sample mean is asymptotically efficient \citep[\S 7.1]{Brock87}.
The exact MLE for the $\mu$ may be obtained iteratively as in \citet{McLeod2008} but in the AR$(1)$ case
the sample mean has close to 100\% efficiency in finite samples \cite[Table 3]{McLeod2008}.
For speed and convenience we may just consider the sample mean estimator in eqn. (\ref{ARONEMU}).
That is, the exact MLE is the $\hat\rho$ described above with $z_t-{\overline z}_n$ ($t=1,...,n$) replacing $z_t$ where ${\overline z}_n$ is the sample mean, which is denoted as $\hat\rho_\mu$.

Under the stationary alternative, the exact MLE and the LSE have the same limiting distribution
\citep[\S 8]{Brock87} but this is not the case under the non-stationary null hypothesis eqn. (\ref{NullModel}).
The next section provides a new derivation of this distribution.

\section{Computer Algebra Derivations to Limiting Distributions}\label{SectionComputer}

In the unit root case, $\rho =1$, we consider the random walk
\begin{equation}
z_t= z_{t-1} +a_t, \quad t=1,2,...,
\label{RW}
\end{equation}
where $\{a_t\}$ is a sequence of IID random variables with mean 0 and finite variance $\sigma_a^2>0$.

Fixed $z_0=0$, the random walk process may be generated by
\begin{eqnarray}
z_{t}=\sum\limits_{j=1}^t{a_j}.
\end{eqnarray}

For the zero-mean case, the normalized and pivotal type statistics may be written as
\begin{equation}
\hat{\delta} = n(\hat \rho -1)
\label{MLEdelta}
\end{equation}
where $\hat \rho$ is described in \S \ref{SectionMLE}, and
\begin{equation}
\hat{\tau} = {1\over{\hat\sigma}}(\sum\limits_{t=2}^n{z^2_{t-1}})^{1/2} (\hat\rho-1)
\label{MLEtau}
\end{equation}
where
$$
\hat\sigma^2= (n-2)^{-1} \sum\limits_{t=2}^n (z_t-\hat\rho z_{t-1})^2.
$$

For the unknown mean case, the normalized statistic may be written as
\begin{equation}
\hat{\delta}_\mu = n(\hat\rho_\mu-1)
\label{MLEdeltaMu}
\end{equation}
where $\hat\rho_\mu$ is described in \S \ref{SectionMLE}, and the corresponding pivotal statistic may be written by
\begin{equation}
\hat{\tau}_\mu = \hat\sigma_\mu^{-1} (\sum\limits_{t=2}^n{(z_{t-1}-{\overline z}_n)^2})^{1/2} (\hat\rho_\mu-1),
\label{MLEtauMu}
\end{equation}
where
$$\hat\sigma_\mu^2= (n-3)^{-1}\sum\limits_{t=2}^n (z_t-{\overline z}_n-\hat\rho_\mu (z_{t-1}-{\overline z}_n))^2.$$
The limiting distributions of statistics in eqns. (\ref{MLEdelta}), (\ref{MLEtau}), (\ref{MLEdeltaMu}) (\ref{MLEtauMu})
are given in Theorems 1 and 2 below.

\begin{theorem}\label{TheoremOne}
Under a random walk (\ref{RW}),
\begin{equation}
n(\hat \rho_\mu-1) \stackrel{D} \longrightarrow \frac 12
\left(\mathfrak{C}_\mu-\sqrt{\mathfrak{C}_\mu^2-4\mathfrak{C}_\mu+2\mathfrak{B}_\mu}\right),
\label{LimitOne}
\end{equation}

\begin{equation}
\hat{\tau}_\mu \stackrel{D} \longrightarrow \frac {\sqrt{\mathfrak{A}_\mu}}
{2} \left(\mathfrak{C}_\mu-\sqrt{\mathfrak{C}_\mu^2-4\mathfrak{C}_\mu+2\mathfrak{B}_\mu}\right),
\label{LimitTwo}
\end{equation}
where
$$\mathfrak{A}_\mu=\int_0^1 W^2(t)\,dt-\left(\int_0^1W(t)\,dt\right)^2,$$
$$\mathfrak{B}_\mu=\mathfrak{A}_\mu^{-1}\left(\left(\int_0^1W(t)\,dt\right)^2+\left(W(1)-\int_0^1W(t)\,dt\right)^2 \right),$$
$$\mathfrak{C}_\mu=\mathfrak{A}_\mu^{-1}\left(\frac 12 \left(W^2(1)-1\right) -W(1)\int_0^1 W(t)\,dt+\left(\int_0^1W(t)\,dt\right)^2\right),$$
and $\{W(t),\ 0 \leq t \leq 1\}$ is a standard Wiener process.
\end{theorem}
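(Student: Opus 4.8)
The plan is to work from the cubic (\ref{CubicMLE}) that characterizes the exact MLE rather than from the closed form (\ref{RhoHat}), because the cubic is much better suited to a local expansion about the unit root. Throughout, $a,b,c$ in (\ref{ABC}) are taken in mean-corrected form, with $z_t$ replaced by $z_t-\overline z_n$, so that the relevant root of (\ref{CubicMLE}) is $\hat\rho_\mu$. Since $\hat\rho_\mu$ is consistent for $1$ under the null, I would set $\rho=1+\delta/n$ with $\delta=n(\hat\rho_\mu-1)=O_p(1)$ and substitute into (\ref{CubicMLE}). Carried out with a computer algebra system, this produces a polynomial in $\delta$ whose coefficients are explicit linear combinations of $a$, $b$, $c$ and powers of $n$; collecting by powers of $\delta$ is precisely the symbolic Taylor-series linearization advertised in \S\ref{SectionComputer}.

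The analytic input is the functional central limit theorem: from $n^{-1/2}z_{\lfloor nr\rfloor}\stackrel{D}{\longrightarrow}\sigma_a W(r)$ and the continuous mapping theorem one obtains the joint convergence
\begin{gather*}
n^{-2}\sum_{t}z_{t-1}^2\stackrel{D}{\longrightarrow}\sigma_a^2\int_0^1 W^2\,dt,\qquad
n^{-1}\sum_{t}z_{t-1}a_t\stackrel{D}{\longrightarrow}\tfrac12\sigma_a^2\big(W^2(1)-1\big),\\
n^{-3/2}\sum_{t}z_{t-1}\stackrel{D}{\longrightarrow}\sigma_a\int_0^1 W\,dt,\qquad
n^{-1/2}\overline z_n\stackrel{D}{\longrightarrow}\sigma_a\int_0^1 W\,dt,
\end{gather*}
from which the mean-corrected sums converge to the demeaned functionals; in particular $n^{-2}a$ and $n^{-2}c$ both tend to $\sigma_a^2\mathfrak A_\mu$. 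Feeding these normalizations into the coefficients found above, the leading contributions to the $\delta^2$, $\delta^1$ and $\delta^0$ terms survive while the $\delta^3$ term and all lower-order remainders are $o_p(1)$, so the defining relation for the limit of $\delta$ collapses to a quadratic. Dividing by the $\delta^2$ coefficient, which tends to $2\sigma_a^2\mathfrak A_\mu$, I expect to reach
$$\delta^2-\mathfrak C_\mu\,\delta-\frac{1}{2\mathfrak A_\mu}=0.$$

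Two short algebraic facts then close the argument. First, the identity $\mathfrak C_\mu-\tfrac12\mathfrak B_\mu=-(2\mathfrak A_\mu)^{-1}$, obtained by expanding the functionals in $W(1)$ and $\int_0^1 W\,dt$, rewrites the constant term so that the discriminant becomes $\mathfrak C_\mu^2-4\mathfrak C_\mu+2\mathfrak B_\mu$, as in (\ref{LimitOne}). Second, the product of the two roots equals $-(2\mathfrak A_\mu)^{-1}<0$, so they have opposite signs; the branch that is the genuine limit of $n(\hat\rho_\mu-1)$ is the a.s.\ negative one, $\tfrac12(\mathfrak C_\mu-\sqrt{\mathfrak C_\mu^2-4\mathfrak C_\mu+2\mathfrak B_\mu})$, consistent with the unique root of (\ref{CubicMLE}) in $(-1,1)$ approaching $1$ from below. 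The continuous mapping theorem transfers convergence of the normalized coefficients to convergence of this root, giving (\ref{LimitOne}). For the pivotal statistic I would show $\hat\sigma_\mu^2\stackrel{p}{\longrightarrow}\sigma_a^2$ (under the null the residuals are asymptotically $a_t$) and write $\hat\tau_\mu=\big(n^{-2}\sum_{t=2}^n(z_{t-1}-\overline z_n)^2\big)^{1/2}\hat\sigma_\mu^{-1}\,n(\hat\rho_\mu-1)$; the prefactor tends to $\sqrt{\mathfrak A_\mu}$, and Slutsky's theorem yields (\ref{LimitTwo}).

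The main obstacle is the bookkeeping in the middle step. The leading $O_p(n^2)$ parts of $a$, $b$ and $c$ coincide and cancel in the $\delta^0$ and $\delta^1$ coefficients --- for example $2b-a-c$ collapses exactly to $-\sum_{t=2}^n(z_t-z_{t-1})^2$, an $O_p(n)$ quantity --- so the nonzero limits come only from the second-order terms, and the mean correction enters through the non-negligible boundary term $(z_1-\overline z_n)^2=O_p(n)$. Getting every order and cancellation exactly right, and establishing the joint rather than merely marginal weak convergence needed before the continuous mapping theorem can be applied to the quadratic, is where the real effort lies; the symbolic computation of \S\ref{SectionComputer} is what makes this manageable.
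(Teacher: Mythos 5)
Your proposal reaches the correct limit by a genuinely different route from the paper. The paper never perturbs the cubic (\ref{CubicMLE}) itself: it takes the explicit radical expression (\ref{RhoHat}) for $\hat\rho_\mu$, rewrites it as a function of $G=b/c$, $H=a/c$ and $1/n$, proves the joint convergence $(c/(\sigma^2n^2),\,n(H-1),\,n(G-1))\stackrel{D}{\longrightarrow}(\mathfrak A_\mu,\mathfrak B_\mu,\mathfrak C_\mu)$ as a separate lemma via Donsker's theorem, and then performs a one-term symbolic Taylor expansion in $1/n$ at zero to get $\hat\rho_\mu=1+\tfrac{1}{2n}\bigl(W-\sqrt{W^2-4W+2X}\bigr)+n^{-2}R_n(W,X)$ with $W=n(G-1)$, $X=n(H-1)$, finishing with the continuous mapping and Slutsky theorems. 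Your substitution $\rho=1+\delta/n$ into the cubic does produce, after exactly the cancellations you describe, the limiting quadratic $\delta^2-W\delta+(W-\tfrac12X)=0$, whose minus branch is $\tfrac12\bigl(W-\sqrt{W^2-4W+2X}\bigr)$, so the two computations agree; your identity $\mathfrak C_\mu-\tfrac12\mathfrak B_\mu=-(2\mathfrak A_\mu)^{-1}$ checks out, as does $2b-a-c=-\sum_{t=2}^n(z_t-z_{t-1})^2$. What your route buys is that the unwieldy closed form is never needed, and the sign-of-the-product-of-roots argument for branch selection is cleaner than anything the paper makes explicit; what it costs is described next.

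The one step you must not assert is $\delta=n(\hat\rho_\mu-1)=O_p(1)$, which you use both to set up the substitution and to discard the $\delta^3$ term. As written this is circular --- tightness of $\delta$ is part of what the theorem claims --- and it is precisely the issue the paper's explicit-formula route sidesteps, since there $\hat\rho_\mu$ is an explicit function of quantities whose tightness follows from the Lemma. The gap is repairable: the polynomial in $\delta$, normalized so its $\delta^2$ coefficient is $2+o_p(1)$, has $\delta^3$ coefficient of order $1/n$, so two of its roots converge to the roots of the limiting quadratic while the third diverges like $-2n$ (it is the root of (\ref{CubicMLE}) near $-1$); alternatively, evaluate the normalized polynomial at $\delta=0$ and $\delta=-K$ to bracket a root in $(-K,0)$ with probability arbitrarily close to one, and invoke the uniqueness of the root of (\ref{CubicMLE}) in $(-1,1)$ to identify that bracketed root with $\hat\rho_\mu$. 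One of these arguments needs to be written out before the continuous mapping theorem can legitimately be applied to the quadratic.
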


\begin{theorem}\label{TheoremTwo}
Under a random walk (\ref{RW}),
\begin{equation}
n(\hat \rho-1) \stackrel{D} \longrightarrow \frac 12
\left(\mathfrak{C}-\sqrt{\mathfrak{C}^2-4\mathfrak{C}+2\mathfrak{B}}\right),
\label{LimitThree}
\end{equation}

\begin{equation}
\hat{\tau} \stackrel{D} \longrightarrow \frac {\sqrt{\mathfrak{A}}}
{2} \left(\mathfrak{C}-\sqrt{\mathfrak{C}^2-4\mathfrak{C}+2\mathfrak{B}}\right),
\label{LimitFour}
\end{equation}
where
$$
\mathfrak{A}=\int_0^1 W(t)^2\,dt,
$$
$$
\mathfrak{B}=\mathfrak{A}^{-1}W(1)^2,
$$
$$
\mathfrak{C}=\mathfrak{A}^{-1}{\frac 12
\left(W^2(1)-1\right)}
$$
and $\{W(t),\ 0 \leq t \leq 1\}$ is a standard Wiener process.
\end{theorem}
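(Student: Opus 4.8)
The plan is to read off the limit directly from the cubic (\ref{CubicMLE}) that characterizes $\hat\rho$, rather than from the cumbersome closed form (\ref{RhoHat}). First I would clear the denominator in (\ref{CubicMLE}) and write the defining polynomial as $P(\rho)=(n-1)c\,\rho^{3}-(n-2)b\,\rho^{2}-(nc+a)\rho+nb$, so that $\hat\rho$ is its relevant root. Reparametrizing by $\rho=1+\delta/n$ with $\delta=n(\hat\rho-1)$ and expanding $P(1+\delta/n)$ about $\rho=1$ (a finite expansion, since $P$ is cubic) yields
\[
P(1)+P'(1)\frac{\delta}{n}+\frac{P''(1)}{2}\frac{\delta^{2}}{n^{2}}+\frac{P'''(1)}{6}\frac{\delta^{3}}{n^{3}}=0 .
\]
This is the symbolic Taylor linearization announced in \S\ref{SectionComputer}, and it is exactly the bookkeeping that computer algebra automates.

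Next I would reduce the coefficients to empirical moments whose limits are known. Using $z_{t}-z_{t-1}=a_{t}$, the telescoping relation $c=a-z_{1}^{2}-z_{n}^{2}$, and the martingale identity $\sum_{t=2}^{n}z_{t-1}a_{t}=\tfrac12\big(z_{n}^{2}-\sum_{t=1}^{n}a_{t}^{2}\big)$, one obtains $P(1)=2b-a-c=-\sum_{t=2}^{n}a_{t}^{2}$, together with $P'(1)=n\big(\sum_{t=1}^{n}a_{t}^{2}-z_{n}^{2}\big)+O_{p}(n)$ and $P''(1)=4na+O_{p}(n^{2})$. The functional limit theory for the random walk (\ref{RW}) then supplies, jointly, $n^{-2}a\Rightarrow\sigma_{a}^{2}\mathfrak{A}$, $n^{-1}z_{n}^{2}\Rightarrow\sigma_{a}^{2}W(1)^{2}$ and $n^{-1}\sum_{t=1}^{n}a_{t}^{2}\to\sigma_{a}^{2}$. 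Dividing the expanded equation by $n$, the cubic term is $O_{p}(n^{-1})$ and drops out while the remaining three coefficients converge, leaving the limiting quadratic
\[
2\mathfrak{A}\,\delta^{2}+\big(1-W(1)^{2}\big)\delta-1=0 ,
\]
which, after substituting the definitions of $\mathfrak{B}$ and $\mathfrak{C}$, is $\delta^{2}-\mathfrak{C}\,\delta-(2\mathfrak{A})^{-1}=0$.

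Solving gives the two candidate limits $\tfrac12\big(\mathfrak{C}\pm\sqrt{\mathfrak{C}^{2}-4\mathfrak{C}+2\mathfrak{B}}\big)$, the discriminant being $\mathfrak{C}^{2}+2\mathfrak{A}^{-1}>0$ almost surely. Since the product of the roots is $-(2\mathfrak{A})^{-1}<0$, exactly one root is negative; and because the MLE satisfies $|\hat\rho|<1$ we have $\delta=n(\hat\rho-1)<0$, so the minus branch is the one selected, giving (\ref{LimitThree}). The continuous mapping theorem then promotes the joint weak convergence of the coefficients to weak convergence of this root. For (\ref{LimitFour}) I would check that $\hat\sigma^{2}\to\sigma_{a}^{2}$ in probability under (\ref{RW}) --- the residual $z_{t}-\hat\rho z_{t-1}=a_{t}+(1-\hat\rho)z_{t-1}$ differs from $a_{t}$ by a uniformly negligible amount --- and then combine $n^{-2}\sum_{t=2}^{n}z_{t-1}^{2}\Rightarrow\sigma_{a}^{2}\mathfrak{A}$ with (\ref{LimitThree}) by Slutsky's theorem, so that $\hat\tau=\hat\sigma^{-1}\big(\sum_{t=2}^{n}z_{t-1}^{2}\big)^{1/2}(\hat\rho-1)\Rightarrow\sqrt{\mathfrak{A}}\,\delta$. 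The unknown-mean statements of Theorem \ref{TheoremOne} would follow by the identical argument with each $z_{t}$ replaced by $z_{t}-\overline z_{n}$.

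The hard part will be the rigorous interchange of limits rather than any single algebraic step. One must establish the joint weak convergence of $\big(n^{-2}a,\,n^{-1}z_{n}^{2},\,n^{-1}\sum_{t=1}^{n}a_{t}^{2}\big)$ through Donsker's theorem and the continuous mapping theorem, verify that the root of the finite-$n$ cubic equal to the MLE is tracked by the minus branch for all large $n$ and not merely in the limit, and confirm that the discarded pieces (the cubic term and the $O_{p}(n)$ remainder in $P'(1)$) are genuinely negligible after the division by $n$. Positivity of the limiting discriminant is what keeps the root-selection map continuous on an event of probability one, and hence what licenses the closing appeal to the continuous mapping theorem.
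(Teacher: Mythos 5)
Your argument is correct, and it reaches the limit by a genuinely different route than the paper. The paper Taylor-expands the explicit Cardano closed form (\ref{RhoHat}) for $\hat\rho$, viewed as a function of $1/n$ after substituting $G=1+W/n$, $H=1+X/n$ with $G=b/c$, $H=a/c$ as in (\ref{GH}); the joint convergence $(c/(\sigma^2n^2),\,n(H-1),\,n(G-1))\Rightarrow(\mathfrak{A},\mathfrak{B},\mathfrak{C})$ (the zero-mean analogue of Lemma \ref{Lemma}) then gives the result via the expansion (\ref{Taylor}), with the symbolic work delegated to \emph{Mathematica} and the remainder controlled by a bound $\sup_n|R_n|\leq C(W,X)$. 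You instead expand the defining cubic (\ref{CubicMLE}) about $\rho=1$, pass to the limit in the equation, and identify the correct branch by a sign argument; your coefficient computations ($P(1)=-\sum_{t=2}^n a_t^2$, $P'(1)=n(\sum a_t^2-z_n^2)+O_p(n)$, $P''(1)=4na+O_p(n^2)$) are exactly the moment identities underlying the paper's Lemma \ref{Lemma}, since $(b-c)/n=n(G-1)(c/n^2)$ and $(a-c)/n=n(H-1)(c/n^2)$, and your limiting quadratic $\delta^2-\mathfrak{C}\delta-(2\mathfrak{A})^{-1}=0$ does have roots $\tfrac12(\mathfrak{C}\pm\sqrt{\mathfrak{C}^2-4\mathfrak{C}+2\mathfrak{B}})$ because $\mathfrak{B}-2\mathfrak{C}=\mathfrak{A}^{-1}$. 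Your route buys freedom from the unwieldy closed form and from justifying a Taylor remainder of a complicated algebraic function; its cost is the root-tracking step you flag, and I would stress that this step needs more than the observation $|\hat\rho|<1$: that only gives $\delta\in(-2n,0)$, not $\delta=O_p(1)$, so you must first argue tightness (e.g., by locating the three roots of $P$: one escapes like $\delta\approx-2n$ because the leading coefficient of the rescaled cubic is $O_p(1/n)$, and the remaining two converge to the roots of the limiting quadratic, which are a.s.\ distinct since the discriminant $\mathfrak{C}^2+2\mathfrak{A}^{-1}>0$; the MLE is the unique negative non-escaping one). With that supplied, your Slutsky argument for $\hat\sigma^2\rightarrow\sigma_a^2$ and hence for (\ref{LimitFour}) is exactly what the paper leaves to the reader, and the reduction of Theorem \ref{TheoremOne} to the mean-corrected sums matches the paper's treatment.
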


To fix ideas, below is demonstrated how {\it Mathematica} helping to prove the limiting distribution of $n(\hat \rho_\mu-1)$,
eqn. (\ref{LimitOne}) in Theorem \ref{TheoremOne} .

$\hat\rho_\mu$ may be further simplified as follows:
\begin{eqnarray*}
\hat\rho_\mu = -\frac{(n-2)G}{3(1-n)} + \frac{(1-i \sqrt{3})u} {2^{2 \over
3}3 (1-n) (v+\sqrt{v^2+4u^3})^{1\over3}}
-{{(1+i\sqrt{3})(v+\sqrt{v^2+4u^3})^{1\over3}}\over
{2^{1\over 3}6}(1-n)}
\end{eqnarray*}
where
\begin{eqnarray*}
u=&& -(n-2)^2 G^2+3 (1-n) (H+n),\\
v=&&16G^3 - 18 G H - 24 G^3 n + 27 G H n + 9 G n\\
    &&+ 12 G^3 n^2- 9 G H n^2 - 27 G n^2 - 2 G^3 n^3 +18 G n^3
\end{eqnarray*}
where
\begin{equation}
G=b/c, \ \ H=a/c
\label{GH}
\end{equation}
where $a$, $b$ and $c$ are defined in (\ref{ABC}) with $z_t-{\overline z}_n$ ($t=1,...,n$) replacing $z_t$.
The limiting distributions of $G$ and $H$ are given in the following lemma.

\begin{lemma} \label{Lemma}
Under a random walk (\ref{RW}),
\begin{equation}
(c/(\sigma^2 n^2), n(H-1), n(G-1)) \stackrel{D} \longrightarrow (\mathfrak{A}_\mu, \mathfrak{B}_\mu, \mathfrak{C}_\mu)
\end{equation}
where $\mathfrak{A}_\mu$, $\mathfrak{B}_\mu$, $\mathfrak{C}_\mu$ are defined in Theorem \ref{TheoremOne}.
\end{lemma}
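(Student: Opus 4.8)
The plan is to reduce the entire statement to Donsker's functional central limit theorem together with the continuous mapping theorem. First I would introduce the normalized partial-sum process
\[
X_n(t) = \frac{z_{\lfloor nt \rfloor}}{\sigma \sqrt{n}}, \qquad 0 \le t \le 1,
\]
and invoke Donsker's theorem, which gives $X_n \Rightarrow W$ in the Skorokhod space $D[0,1]$, where $W$ is a standard Wiener process. Writing $\tilde z_t = z_t - \overline z_n$ for the mean-corrected series, the key point is that demeaning subtracts a term that itself converges, namely $\overline z_n/(\sigma\sqrt n) \Rightarrow \int_0^1 W(s)\,ds$, so that every demeaned functional below becomes a continuous functional of $X_n$ alone. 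Recall that in the Lemma the quantities $a,b,c$ are those of (\ref{ABC}) formed from $\tilde z_t$, and $G=b/c$, $H=a/c$.

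The crucial algebraic step is to express the numerators $a-c$ and $b-c$ in closed form so that only boundary terms and a quadratic-variation sum survive. Since $\tilde z_t - \tilde z_{t-1} = z_t - z_{t-1} = a_t$, telescoping gives
\[
a - c = \tilde z_1^2 + \tilde z_n^2, \qquad b - c = \tfrac{1}{2}\Bigl(\tilde z_1^2 + \tilde z_n^2 - \sum_{t=2}^n a_t^2\Bigr),
\]
the second following from $\tilde z_t \tilde z_{t-1} = \tfrac12(\tilde z_t^2 + \tilde z_{t-1}^2 - a_t^2)$. These identities are what make the limits tractable, since $n(H-1) = n(a-c)/c$ and $n(G-1) = n(b-c)/c$ now involve only $\tilde z_1$, $\tilde z_n$, the sum $\sum a_t^2$, and $c$.

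Next I would rewrite each scaled quantity as a functional of $X_n$ and pass to the limit. For the first coordinate,
\[
\frac{c}{\sigma^2 n^2} = \frac{1}{n}\sum_{t=2}^{n-1}\Bigl(\frac{\tilde z_t}{\sigma\sqrt n}\Bigr)^2 \Rightarrow \int_0^1\Bigl(W(t)-\int_0^1 W(s)\,ds\Bigr)^2 dt = \mathfrak{A}_\mu;
\]
for the boundary terms, $\tilde z_1/(\sigma\sqrt n) \Rightarrow -\int_0^1 W(s)\,ds$ and $\tilde z_n/(\sigma\sqrt n) \Rightarrow W(1)-\int_0^1 W(s)\,ds$; and by the law of large numbers $n^{-1}\sigma^{-2}\sum_{t=2}^n a_t^2 \to 1$ in probability. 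Since the building blocks $\bigl(c/(\sigma^2 n^2),\,(a-c)/(\sigma^2 n),\,(b-c)/(\sigma^2 n)\bigr)$ are a single continuous image of $X_n$ (with the $\sum a_t^2$ term contributing only a constant, folded in by Slutsky's theorem), they converge jointly; applying the continuous map $(x,y,z)\mapsto(x,y/x,z/x)$, valid because $\mathfrak{A}_\mu>0$ almost surely, and expanding $(W(1)-\int_0^1 W)^2$, yields exactly $(\mathfrak{A}_\mu,\mathfrak{B}_\mu,\mathfrak{C}_\mu)$.

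The main obstacle is the rigorous justification of the joint weak convergence rather than the algebra. One must verify that the maps $x \mapsto \int_0^1 x^2$, $x\mapsto \int_0^1 x$, and the endpoint evaluation $x \mapsto x(1)$ are continuous at the points of $C[0,1]$ where the limit $W$ lives almost surely, so that the continuous mapping theorem applies in $D[0,1]$; the endpoint evaluation in particular needs the usual care, since coordinate projections are continuous only at continuity points of the limit. One must also check that the boundary contribution $\tilde z_1^2$ and the omitted index near $t=1$ in the sum defining $c$ are asymptotically negligible, and that the in-probability convergence of $n^{-1}\sum a_t^2$ may be combined with the weak convergence of the Wiener functionals despite their dependence — which is legitimate precisely because that term converges to a deterministic constant.
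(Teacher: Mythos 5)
Your proposal is correct and follows essentially the same route as the paper's Appendix proof: Donsker's theorem for the partial-sum process, algebraic identities reducing $a-c$ and $b-c$ to boundary terms $\tilde z_1^2,\ \tilde z_n^2$ plus the quadratic-variation sum $\sum a_t^2$, and then the continuous mapping theorem combined with Slutsky's theorem for the ratios $n(G-1)=\bigl((b-c)/n\bigr)/(c/n^2)$ and $n(H-1)=\bigl((a-c)/n\bigr)/(c/n^2)$. Your polarization identity $\tilde z_t\tilde z_{t-1}=\tfrac12(\tilde z_t^2+\tilde z_{t-1}^2-a_t^2)$ gives a slightly more symmetric (but algebraically equivalent) form of the paper's eqn.~(\ref{A2}), and your explicit attention to the continuity of the endpoint evaluation $x\mapsto x(1)$ on $D[0,1]$ is a point the paper glosses over.
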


A detailed proof of Lemma \ref{Lemma} can be found in Appendix \ref{App}.

\begin{proof}[Proof of eqn. (\ref{LimitOne}) in Theorem \ref{TheoremOne}]
Let $W = n(G-1)$ and $X =n(H-1)$.
Lemma \ref{Lemma} implies that $ W =O_p(1)$ and $ X =O_p(1)$.
$\hat {\rho}_\mu$ can be considered as a function of $1/n$ with $1+ W/n$ and $1+X/n$ replacing $G$ and $H$.
In order to obtain the limit distribution of $\hat {\rho}_\mu$, taking $\hat {\rho}_\mu$ with one-term Taylor expansion with respect to $1/n$ at zero,
\begin{equation}
\hat\rho_\mu=1+\frac{1}{2n}\left(W-\sqrt{W^2-4W+2X}\right)+ \frac{1}{n^2} R_n(W, X).
\label{Taylor}
\end{equation}
where $\sup_{n \geq 1} |R_n(W,X)| \leq C(W,X)$  that is a continuous function of $W$ and $X$.
Below is a {\it Mathematica} script and its output for deriving eqn (\ref{Taylor}).
\begin{verbatim}
In[1]:= u = -(n-2)^2G^2+3(1-n)(H+n);
In[2]:= v = 16 G^3-18 G H - 24 G^3 n+27 G H n + 9G n + 12 G^3 n^2
            -9 G H n^2-27 G n^2-2 G^3 n^3+18 G n^3;
In[3]:= rho = -(-2+n)G/(3(1-n))
        +((1-i Sqrt[3])(u))/(3 2^(2/3)(1-n)(v+Sqrt[v^2+4u^3])^(1/3))
        -1/(6 2^(1/3)(1-n))((1+i Sqrt[3])(v+Sqrt[v^2+4u^3])^(1/3));
In[4]:= G = 1 + W/n; H = 1 + X/n; n = 1/z;
        Simplify[Series[rho, {z, 0, 1}]]
\end{verbatim}
and the output of the final input is
\begin{verbatim}
Out[4]= 1+1/2(W+i(4W-W^2-2X)^(1/2))z+O[z]^2
\end{verbatim}
which leads to eqn. (\ref{Taylor}).
Following the fact that $W=O_p(1)$, $X=O_p(1)$ and the continuity of $C(W, X)$,
$$\frac{1}{n} R_n(W, X) \stackrel{P} \longrightarrow 0.$$
By Lemma 1,
$$
(W, X) \stackrel{D}  \longrightarrow (\mathfrak{C}_\mu, \mathfrak{B}_\mu).
$$
Thus applied the continuous mapping theorem described in Appendix \ref{App} and the Slutsky's theorem to eqn. (\ref{Taylor}), enq. (\ref{LimitOne}) is obtained.
\end{proof}

The limiting distributions of $\hat\tau_\mu$, $\hat {\rho}$, and $\hat\tau$ in eqns. (\ref{LimitTwo}) - (\ref{LimitFour}) can be very similarly derived.

\citet[Theorem 10.1.10 and Corollary 10.1.10]{Fuller96} show that eqn. (\ref{LimitOne}) and eqn. (\ref{LimitThree}) hold,
which indicates that the computer algebra derivations implemented here are appropriate.
Other than the normalized statistics, eqns. (\ref{LimitTwo}) and (\ref{LimitFour}) show the limiting distributions on the unit root boundary of pivotal statistics for both zero-mean and unknown mean cases.

\section{Methods of Implementing the Test}\label{Section:Implementation}

The asymptotic distribution may be evaluated by computer simulation
methods for Brownian motion.
Such methods are discussed in the book by \citet{Iacus2008}.
Then this asymptotic distribution could be used to obtain
critical values and/or p-values for the test.
As we will show below, this method will not work unless the
series length is very long.

The simplest approach is to use a Monte-Carlo test.
Under general conditions this approach provides an accurate test
that can be efficiently computed using parallel processing capabilities
found on many modern computer environments.
For example, the necessary steps are outlined below for the normalized test:\\

\noindent 1) simulate $M$ random walks under (\ref{NullModel}) with the length of $n$ and compute the simulated testing statistic sample,
$n({\hat \rho}^1_\mu-1)$, $n({\hat \rho}^2_\mu-1)$, ..., $n({\hat\rho}^M_\mu-1)$;\\
2) compute the observed testing statistic value for the given time series $\{z_t\}$, $n({\hat\rho}^0_\mu-1)$;\\
3) count the number of times $k$ that the simulated test statistic $n(\hat {\rho}^i_\mu-1)$ ($i=1, ..., M$)
is less than or equal to the observed test statistic $n({\hat\rho}^0_\mu-1)$;\\
4) compute the p-value as $(k+1)/(M+1)$.\\

\noindent Instead of using independent normal random variables to generate the random walks in Step 1),
we could use a bootstrap sample of the residuals. This test has been implemented in the function {\tt mctest} in our R package for MLE unit root tests \citep{mleur}.

An even more computationally efficient approach is to use response
surface regression \citep{MacKinnon02} to estimate the quantile functions for the exact
distribution.
The response surface regressions are of the form,
$$Q^{\alpha}(n)=\theta_{\infty}+{\theta_1}/n+{\theta_2}/n^2+{\theta_3}/n^3+\epsilon,$$
where $Q^{\alpha}(n)$ is an $\alpha$ percentile of the finite sample distribution that is estimated
using $N$ replications and $\epsilon$ is an error term.
The curve was fit with the massive cluster computer SHARCNET utilizing 221 compute nodes for about
ten hours.
Thirty-six series lengths $n$ used were 20, (5), 100, (20), 300, (50), 500, (100), 1000.
For each series length $N=200000$ replications were done and this as repeated $M=100$ times.
From this the mean and variance of each percentile were estimated and used in
a weighted least squares regression to obtain the final fitted regression.
The weighted least squares approach is needed to account for heteroskedasticity in the error terms.

In the case of the model specified in eqns. (\ref{NullModel}) and (\ref{ARONE}),
the critical values for the test statistic $\hat{\tau}_\mu$ given in eqn. (\ref{MLEtauMu}) are:
\begin{equation}\label{RSR}
\hat Q^{\alpha}(n) = \left\{ \begin{array}{rl}
 -3.110 - 4.652/n - 51.466/n^2 &\mbox{1\%  point} \\
 -2.531 - 2.062/n - 17.529/n^2 &\mbox{5\%  point} \\
 -2.233 - 1.219/n - 8.178/n^2  &\mbox{10\%  point}
       \end{array} \right.
\end{equation}
Figure 1 below illustrates these curves for series lengths up to 500.
The dashed line shows the critical point from the asymptotic distribution.
It is seen that a reasonably large sample is needed to obtain
accurate critical values using the asymptotic distribution.
The y-axis each panel is scaled so scaling unit is the same.
This scaling reveals the critical values corresponding 10\% converge more
quickly while the 1\% critical values converge slowly.

Extensive simulation experiments were performed for a variety of series lengths, $n$,
and parameters, $\rho$, to check that the p-values produced by the Monte-Carlo method
agreed with that produced by the critical values from eqn. (\ref{RSR}).

Implementing the explicit expression of the exact MLE derived in \S \ref{SectionMLE} and the critical value equations such as eqns. (\ref{RSR}), our R function {\tt mleur} for the MLE unit root tests is available in our R package {\tt mleur} \citep{mleur}.

\section{Power Comparisons}\label{Section:PowerComparison}

We investigated the power of the MLE unit root tests under various types of innovations in comparison with that of the standard Dickey-Fuller test.
Under our null model (\ref{NullModel}), and alternative model (\ref{ARONE}) or (\ref{ARONEMU}), the unknown mean case is more realistic than the known mean case.
Thus the MLE unit root test was implemented with the sample mean correction in the normalized form $n(\hat\rho_\mu-1)$ or the pivotal form $\hat\tau_\mu$, denoted by MLEn or MLEp respectively.
In R the standard Dickey-Fuller test is implemented in several packages and usually the pivotal form of test statistic is used.
We used the implementation of the Dickey-Fuller pivotal test for the same model as (\ref{NullModel}) and (\ref{ARONE}) with an unknown mean or interpret in
the R package {\tt urca} by \cite{urca}, represented by DF in this paper.
The function {\tt GetPower} for making such power comparisons is given in our package \citep{mleur}

In constructing critical value eqns. (\ref{RSR}), the simulated series were
assumed to be Gaussian.
But since the asymptotic distribution only relies on the assumption that the innovations are
independent with mean zero and finite variance $\sigma_a^2$,
it is plausible that the critical values given in eqns. (\ref{RSR})
may also be applicable for other non-normal distributions with finite variance.
In fact, using our R function {\tt GetPower}, we found no difference from the
normal distribution results with Student-t on 5 degrees of freedom.
A more challenging question is how well these results continue to hold
when these assumptions are not met as in the case of infinite variance
distributions, or series exhibiting conditional heteroscedasticity and non-linear
dependence.
To answer this question, a portion of our simulation results is shown in Table 1.
$25,000$ replications were done for series of lengths $n=30, 70, 100, 200$ and parameters
$\rho = 0.65, 0.85, 0.9, 0.95, 1.0$ for the innovations generated by a stable distribution and a GARCH model described in the following.
With so many replications the 95\% margin-of-error (MOE) was about 0.0062
or 0.62 in percentage terms.
This computations took less than 3 hours on a multicore PC.

The random variable $Z$ has a stable distribution with index $\alpha$, scale $\sigma>0$, skewness $|\beta|<1$
and location $\mu \in {\cal R}$ if its characteristic function is given by,
\begin{equation}
{\E}(e^{i t Z })=\left\{
\begin{array}{cl}
 \exp \left\{ -\sigma |t|^{\alpha}\, \left( 1-i \beta \ \ \mbox{sgn}(t)\, \tan \frac{\pi \alpha }{2}\right) +\,i \mu  t\right\} & \mbox{if}\
  \alpha \neq 1 \nonumber\\\\
 \exp \left\{ -\sigma  |t|\, \left( 1+i \beta  \frac{2}{\pi}\ \ \mbox{sgn}(t)\, \log |t| \right) +\,i \mu  t\right\}
 & \mbox{if}\  \alpha =1,
\end{array}
\right.
\newcounter{cf}
\setcounter{cf}{\value{equation}}
\end{equation}
where
\[
\mbox{sgn}(t)=\left\{
\begin{array}{rl}
 1  &\mbox{if}\quad t >0 \\
 0  &\mbox{if}\quad t=0\\
 -1 &\mbox{if}\quad t<0.
\end{array}
\right.
\]
Since it has been suggested that many financial time series appear to have
a stable distribution with $\alpha$ in the range $(1.35, 1.75)$,
$\alpha$ was set to $1.5$ for our simulations.
Also, $\sigma=1$, $\beta=0$, and $\mu=0$.

A GARCH($1, 1$) sequence $a_t, t = \ldots, -1, 0, 1, \ldots  $ is of the form
$$
 a_{t} = \sigma_t \epsilon_{t}
$$
and
$$\sigma_t^2 =
  \omega +  \alpha_1 a^2_{t-1} +  \beta_1 \sigma_{t-1}^2,
$$
where we took $\epsilon_t$ to be independent standard normal,
$\omega=10^{-6}, \alpha_1 = 0.2$ and $\beta_1 = 0.7$.
The parameters were chosen to approximate models that have
been used in actual applications.

Table 1 shows that there can be substantial difference in power
between the MLE unit root test and the Dickey-Fuller tests not only in the normal case but also for innovations generated from an infinite variance stable distribution as well as for innovations generated from a GARCH(1,1) process.
It is observed that the size of the test is slightly inflated for the non-normal case,
so this needs to be taken into account in the power comparison.
In general it appears that the pivotal form of the test statistic,
MLEp, is preferable to the normalized form, MLEn.
MLEp is just as robust as MLEn and has slightly better power.

Further empirical power analysis may easily be carried out similarly with our R function {\tt GetPower}.

\section{Illustrative Applications}\label{Section:Application}

In actual applications, it is recommended that diagnostic checks be done
for residual autocorrelation.
If there is significant autocorrelation in the residuals of the
fitted AR(1) model then other methods such as the augmented Dickey-Fuller
test must be used.
The model building procedure needed for this Dickey-Fuller
test family is discussed
by \citet{PfaffBook2006} and is available in the R package {\tt urca} \citep{urca}.
Our R package {\tt mleur} \citep{mleur} provides suitable model
diagnostic checks for applying the MLE root test and is
used in the applications discussed below.
R scripts to generate the analyses reported below are available
in our package documentation.

\subsection{Velocity of money}\label{Subsection:Money}

The time series plot for the velocity of money in the U.S.
1869-1970 is shown in Figure \ref{velTsplot}.
From the plot we see the series has historically exhibited a strong
stochastic trends characteristic of random walk behavior.
No doubt with modern emphasis on fiscal policies to control inflation
the series has stabilized.
But just for a numerical illustration of the difference in the unit
root tests we will compare the maximum likelihood and least squares or
Dickey-Fuller tests.
The first step in the analysis is the check that the fitted model
is adequate and that no additional lags are required.
Figure \ref{velDiag} shows the diagnostic checks
for this data.
The residuals appear non-normal but in view of the simulation
results this is not a concern.
Most importantly no evidence of residual autocorrelation is found in the
fitted AR(1) model.
Applied the unit root tests, the pivotal test statistics for
the MLE and DF tests were respectively $-0.26$ and $-3.28$.
The MLE test is not even close to being significant at the 10\% level
while the DF test has a p-value between 5\% and 1\%.
The MLE unit root test gives a result that appears to be more
in line with the overall impression of strong stochastic
trends exhibited in Figure \ref{velTsplot}.
Even though the length of the series was $102$, there is
a considerable difference in the conclusion between the two methods.

\subsection{Bond yield differences}\label{Subsection:Bonds}

The annual difference in Mood's BAA and AAA corporate bond yields
from 1976 to 2010 is shown in Figure \ref{DiffBATsplot}.
From the diagnostic check plots,
we conclude that there is no significant autocorrelation in the residuals
and so the AR(1) may be fit.
The DF test is not significant at 10\% whereas the MLE test does
reject at the 10\% level.
This is not surprising in view of the empirical power computations.

\section{Summary}\label{Section:Summary}

In this paper we presented a new derivation of the asymptotic distribution for the MLE
unit root test utilizing computer algebra to obtain an explicit expression for the
MLE and a Taylor series linearization for the test statistic.
This technique is {\bf no} doubt applicable in other situations where the manual derivation
is difficult.

An efficient computational method based on the response surface curves has been
implemented to obtain critical values of the MLE test statistics.
An empirical power study has demonstrated that not only does the MLE procedure
outperform the LSE in the Gaussian case but also for fat-tailed distributions, infinite variance distributions,
and for weak dependence as exhibited in a GARCH$(1,1)$ process.
The R package {\tt mleur} based on the developments in this paper is available on CRAN.

Two illustrative applications of the test demonstrate that unit root testing
also requires diagnostic checking.
It is important for proper applications that there be no residual autocorrelation
present in the fitted AR$(1)$ model.\\

\noindent ACKNOWLEDGEMENTS

The authors would like to thank the Editor for his encouragement and also two referees for their helpful suggestions and comments.
The authors research was supported by Natural Sciences and Engineering Research Council of Canada (NSERC).

\newpage
\section{Appendix}\label{App}
First we state the Donsker's theorem \citep{Billingsley99}.
Let $\{a_t\}$ be a sequence of IID random variables with mean 0 and finite
variance $\sigma^2>0$, $z_t=\sum_{j=1}^t a_j$ and $z_0=0$. Then
$$
\left\{ \frac{z_{[nt]}} {\sqrt{n}\sigma},\ 0\leq t\leq 1\right\}
\stackrel{D} \longrightarrow \{W(t),\ 0 \leq t\leq 1\}
$$
in the Skorokhod space $D[0,\ 1]$ with $J_1$ topology, where $[x]$
denotes the integer part of $x$. One of the important applications
of the Donker's theorem is the following continuous mapping theorem. If
$f(\cdot)$ is a continuous functional on $[0,\ 1]$, then
$$
f \left(\frac{z_{[nt]}}{\sqrt{n}\sigma}\right)\stackrel{D}
\longrightarrow f(W(t)).
$$
\begin{proof}[Proof of Lemma \ref{Lemma}]
We have
$$
\frac{\bar{z}_n}{\sqrt{n}}= \int_0^1\frac{z_{[nt]}}{\sqrt{n}}\,
 dt+\frac{z_n}{n \sqrt{n}}.
 $$
By the Donsker's theorem
$$
\frac{\bar{z}_n}{\sqrt{n}} \stackrel{D} \longrightarrow \sigma
\int_0^1 W(t)\,dt.
$$
Similarly,
$$
\frac 1 {n^2} \sum_{t=1}^{n-1}
z_t^2=\int_0^1\left(\frac{z_{[nt]}}{\sqrt{n}}\right)^2\,dt
\stackrel{D} \longrightarrow  \sigma^2\int_0^1 W^2(t)\,dt.
$$
It can be shown that
\begin{equation}
c=\sum_{t=2}^{n-1}\left(z_t-\bar{z}_n\right)^2=\sum_{t=1}^{n-1}z_t^2-z_1^2-(n+2)\bar{z}_n^2+2\bar{z}_n (z_1+z_n).
\label{A1}
\end{equation}
By some simple algebra steps,
\begin{eqnarray*}
  b-c &=& \sum_{t=1}^{n-1}(z_{t+1}-z_t)(z_t-\bar{z}_n) +(z_1-\bar{z}_n)^2\\
   &=&\frac 12 z_n^2-\frac 12
   \sum_{t=1}^na_t^2-\bar{z}_n\sum_{t=2}^n  a_t +(z_1-\bar{z}_n)^2,
\end{eqnarray*}
that is,
\begin{equation}
\frac{b-c}{n} = \frac 12 \left(\frac{z_n}{\sqrt{n}}\right)^2-\frac
1{2n}\sum_{t=1}^na_t^2-\frac{\bar{z}_n}{\sqrt{n}}\frac{z_n}{\sqrt{n}} +
\frac{\bar{z}_n}{\sqrt{n}}\frac{a_1}{\sqrt{n}}+\left(\frac{z_1}{\sqrt{n}}
-\frac{\bar{z}_n}{\sqrt{n}}\right)^2.
\label{A2}
\end{equation}
Moreover
\begin{equation}
\frac{a-c}{n}=\left(\frac{z_1}{\sqrt{n}}-\frac{\bar{z}_n}{\sqrt{n}}\right)^2
+\left(\frac{z_n}{\sqrt{n}}-\frac{\bar{z}_n}{\sqrt{n}}\right)^2.
\label{A3}
\end{equation}
Since
$$
\frac{b-c}{n} = n(G-1) (\frac{c}{n^2})
$$
and
$$
\frac{a-c}{n} = n(H-1) (\frac{c}{n^2}),
$$
where $G$ and $H$ are defined in eqn (\ref{GH}).  Eqns. (\ref{A1}), (\ref{A2}), and (\ref{A3}) imply that
$$
(\frac{c}{\sigma^2 n^2}, n(H-1), n(G-1)) = f(\frac{z_{[nt]}}{\sqrt{n}\sigma})+o_p(1),
$$
where $f$ is a functional.  Hence by the continuous mapping theorem described above and Slutsky's theorem,
$$
(\frac{c}{\sigma^2 n^2}, n(H-1), n(G-1)) \stackrel{D}\longrightarrow  f(W(t)).
$$
Applied the Cramer-Rao device, we find the marginal distributions as
$$
 \frac{c}{\sigma^2 n^2}\stackrel{D}\longrightarrow \int_0^1
 W^2(t)\,dt-\left(\int_0^1W(t)\,dt\right)^2=\mathfrak{A}_\mu,
$$
$$
n(G-1)
\stackrel{D}\longrightarrow \mathfrak{A}_\mu^{-1}\left(\frac 12
\left(W^2(1)-1\right) -W(1)\int_0^1
W(t)\,dt+\left(\int_0^1W(t)\,dt\right)^2\right)=\mathfrak{C}_\mu,
$$
and
$$
n(H-1)\stackrel{D}\longrightarrow \mathfrak{A}_\mu^{-1}\left(\left(\int_0^1W(t)\,dt\right)^2+\left(W(1)-
\int_0^1W(t)\,dt\right)^2 \right) = \mathfrak{B}_\mu.
$$
\end{proof}
\newpage

\begin{table}[ht!]
\begin{center}
\begin{adjustwidth}{-0.25in}{-1in}
\begin{tabular}{|rr|rrr|rrr|rrr|}
  \hline
\multicolumn{2}{c}{   } & \multicolumn{3}{c}{normal} & \multicolumn{3}{c}{stable} & \multicolumn{3}{c}{GARCH(1,1)} \\
  \hline
\hline
$n$ & $\rho$ & DF & MLEn & MLEp & DF & MLEn & MLEp & DF & MLEn & MLEp \\[0.7ex]
\hline
30	&	0.65	&	39.8	&	56.5	&	59.6	&	36.5	&	55.7	&	59.4	&	 42.0	&	56.2	&	 59.0 \\
70	&	0.65	&	97.6	&	99.8	&	99.7	&	97.7	&	98.6	&	98.1	&	 95.6	&	98.9	&	 98.8 \\
100	&	0.65	&	100.0	&	100.0	&	100.0	&	99.7	&	99.3	&	99.1	&	 99.7	&	100.0	&	 99.9 \\
200	&	0.65	&	100.0	&	100.0	&	100.0	&	99.9	&	99.7	&	99.6	&	 100.0	&	100.0	&	 100.0 \\ [0.5ex]
30	&	0.85	&	12.1	&	16.6	&	18.3	&	11.6	&	12.5	&	13.6	&	 14.1	&	18.3	&	 20.0 \\
70	&	0.85	&	37.4	&	55.1	&	57.4	&	33.6	&	53.9	&	57.0	&	 39.7	&	55.9	&	 57.8 \\
100	&	0.85	&	63.2	&	83.2	&	84.2	&	65.2	&	84.3	&	84.6	&	 64.5	&	81.4	&	 82.1 \\
200	&	0.85	&	99.6	&	100.0	&	100.0	&	99.4	&	98.9	&	98.4	&	 98.7	&	99.7	&	 99.6 \\ [0.5ex]
30	&	0.90	&	8.4	&	10.7	&	11.9	&	8.9	&	8.3	&	8.9	&	9.8	&	11.8	 &	13.1 \\
70	&	0.90	&	19.4	&	29.8	&	31.4	&	17.4	&	24.6	&	26.8	&	 22.0	&	32.0	&	 33.7 \\
100	&	0.90	&	33.3	&	51.0	&	52.8	&	29.7	&	49.4	&	52.8	&	 36.3	&	51.9	&	 53.5 \\
200	&	0.90	&	86.8	&	97.2	&	97.0	&	89.3	&	95.7	&	94.8	&	 84.3	&	94.7	&	 94.7 \\ [0.5ex]
30	&	0.95	&	6.7	&	7.6	&	8.3	&	6.5	&	5.5	&	5.7	&	7.8	&	8.1	&	9.1 \\
70	&	0.95	&	9.2	&	12.5	&	13.3	&	9.0	&	9.5	&	9.9	&	11.0	&	 14.7	&	15.4 \\
100	&	0.95	&	12.5	&	19.0	&	19.8	&	11.8	&	14.2	&	15.1	&	 14.6	&	21.2	&	 22.0 \\
200	&	0.95	&	32.5	&	51.1	&	52.5	&	28.9	&	47.7	&	50.7	&	 36.0	&	52.6	&	 53.9 \\ [0.5ex]
30	&	1.00	&	5.5	&	4.9	&	5.5	&	6.3	&	4.3	&	4.4	&	7.0	&	6.1	&	6.7 \\
70	&	1.00	&	5.2	&	5.1	&	5.3	&	6.0	&	3.7	&	3.7	&	7.0	&	6.1	&	6.4 \\
100	&	1.00	&	5.0	&	5.3	&	5.6	&	6.0	&	3.9	&	3.8	&	6.7	&	6.3	&	6.3 \\
200	&	1.00	&	4.9	&	4.8	&	4.9	&	5.9	&	3.8	&	3.8	&	6.2	&	6.2	&	6.3 \\
 \hline
 \end{tabular}
\end{adjustwidth}
\caption{
Empirical power of 5\% unit root tests based on 25,000 simulations using innovations
from various distributions.
Tests were Dickey-Fuller (DF), MLE normalized (MLEn), and MLE pivotal (MLEp).
The distributions used were standard normal,
stable distribution with index parameter 1.5, and a GARCH$(1,1)$
process.
The 0.95 level MOE for the table percentage is 0.62}
\end{center}
\end{table}

\newpage

\begin{figure}[ht!]
\begin{center}
\includegraphics*[scale=0.8]{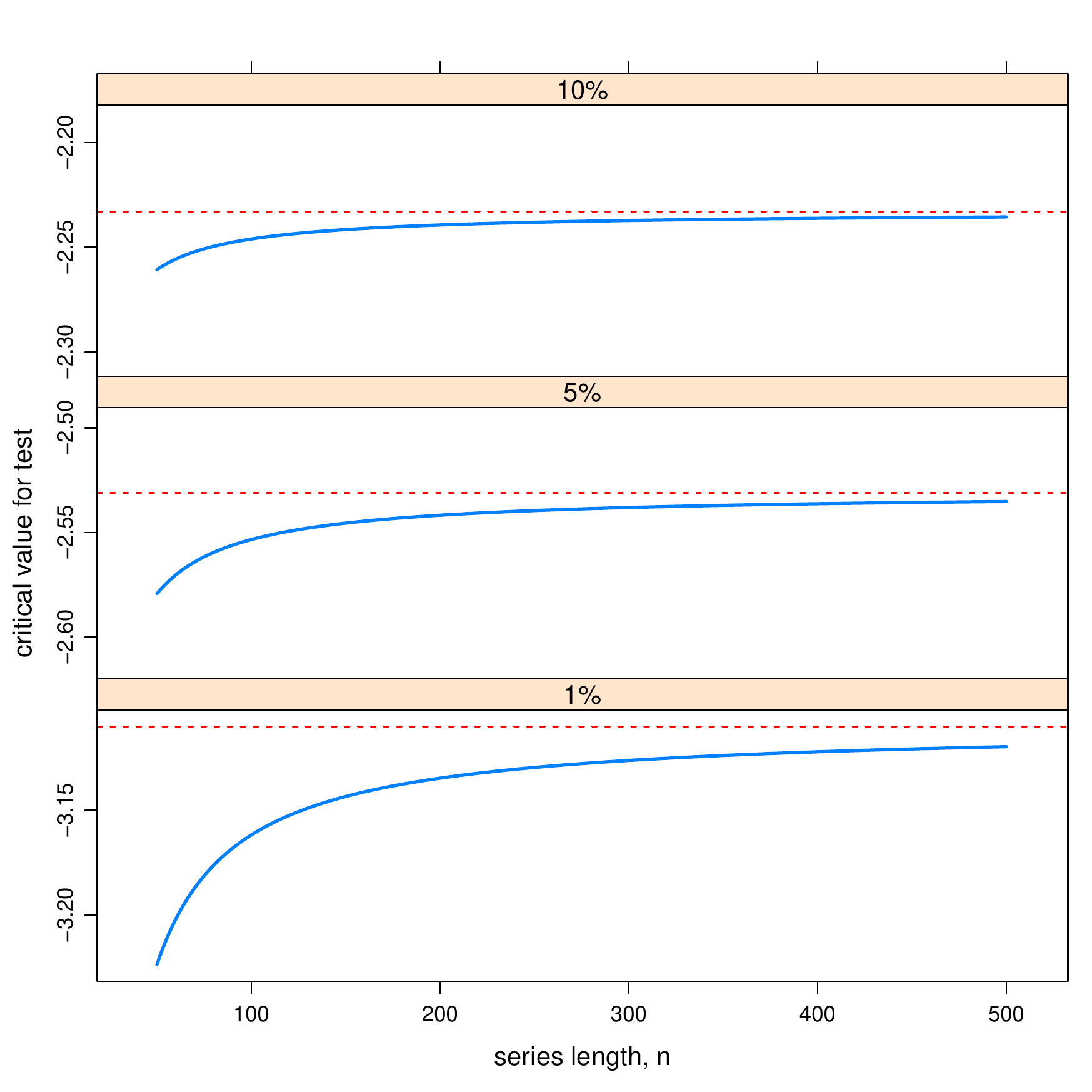}
\end{center}
\caption{The 1\%, 5\% and 10\% critical values for the MLE test statistic $\hat{\tau}_\mu$}
\label{Fig.VAR1}
\end{figure}

\newpage

\begin{figure}[ht!]
\begin{center}
\includegraphics*[scale=0.8]{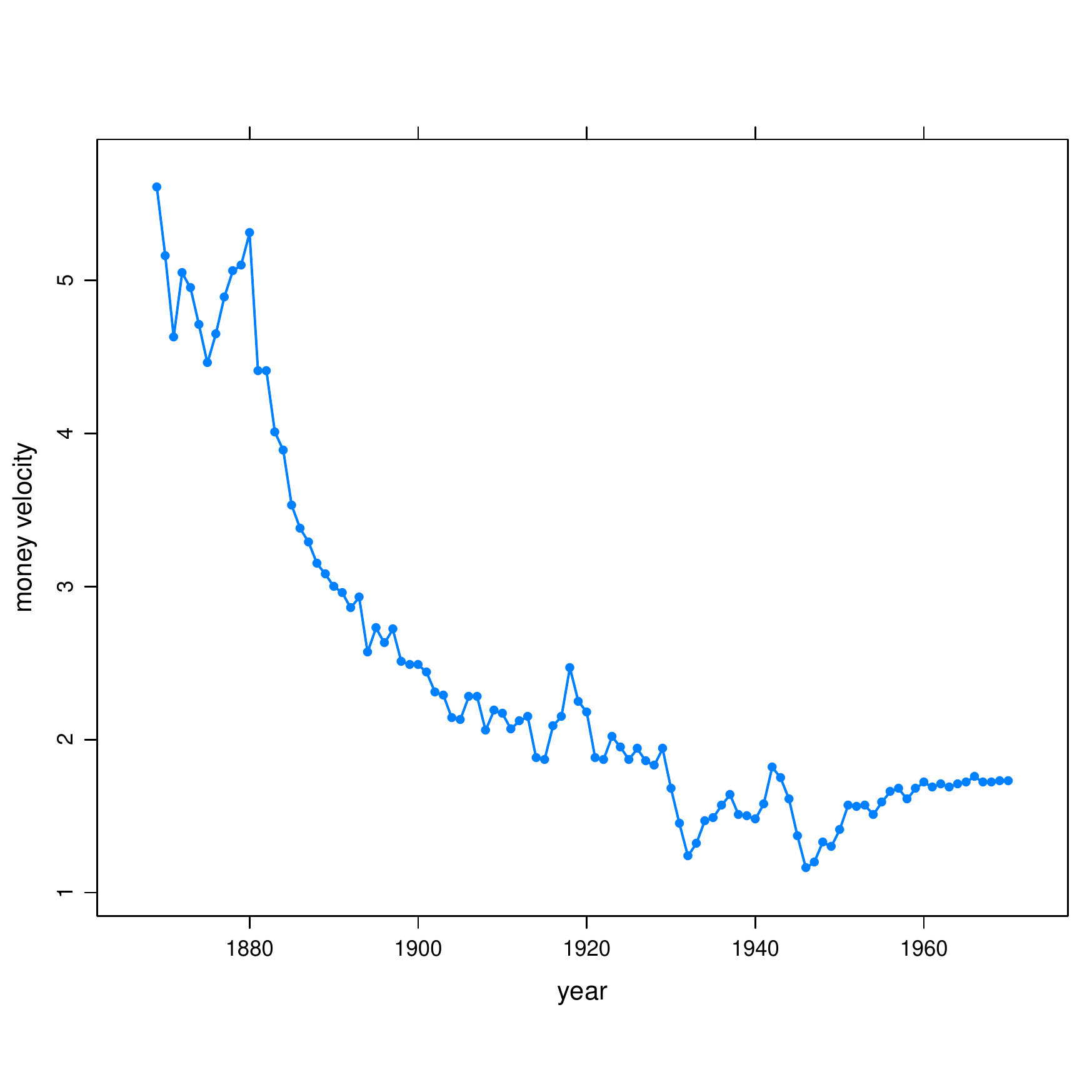}
\end{center}
\caption{Time series plot for money velocity}
\label{velTsplot}
\end{figure}

\newpage

\begin{figure}[ht!]
\begin{center}
\includegraphics*[scale=0.8]{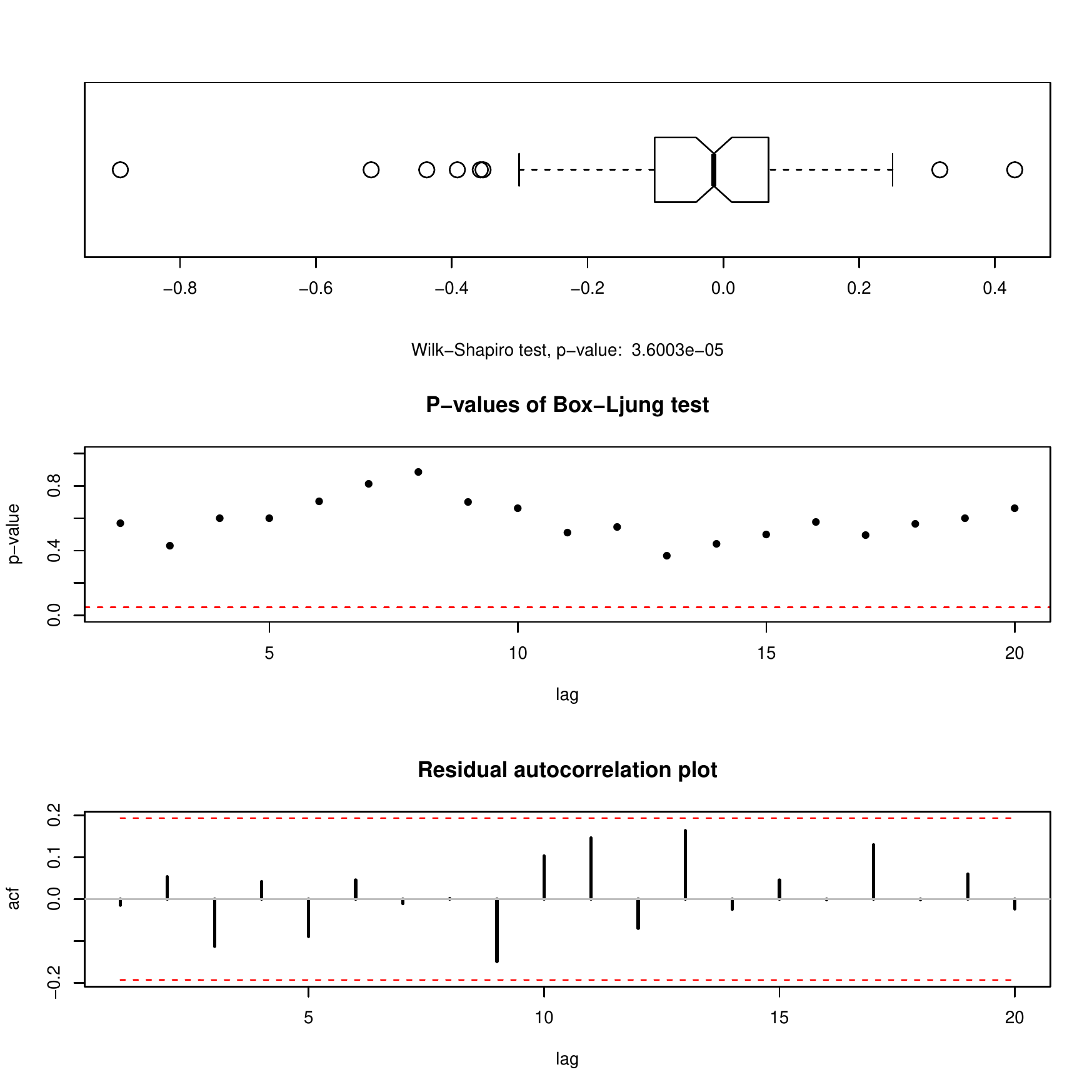}
\end{center}
\caption{Diagnostic plots for money velocity time series}
\label{velDiag}
\end{figure}

\newpage

\begin{figure}[ht!]
\begin{center}
\includegraphics*[scale=0.8]{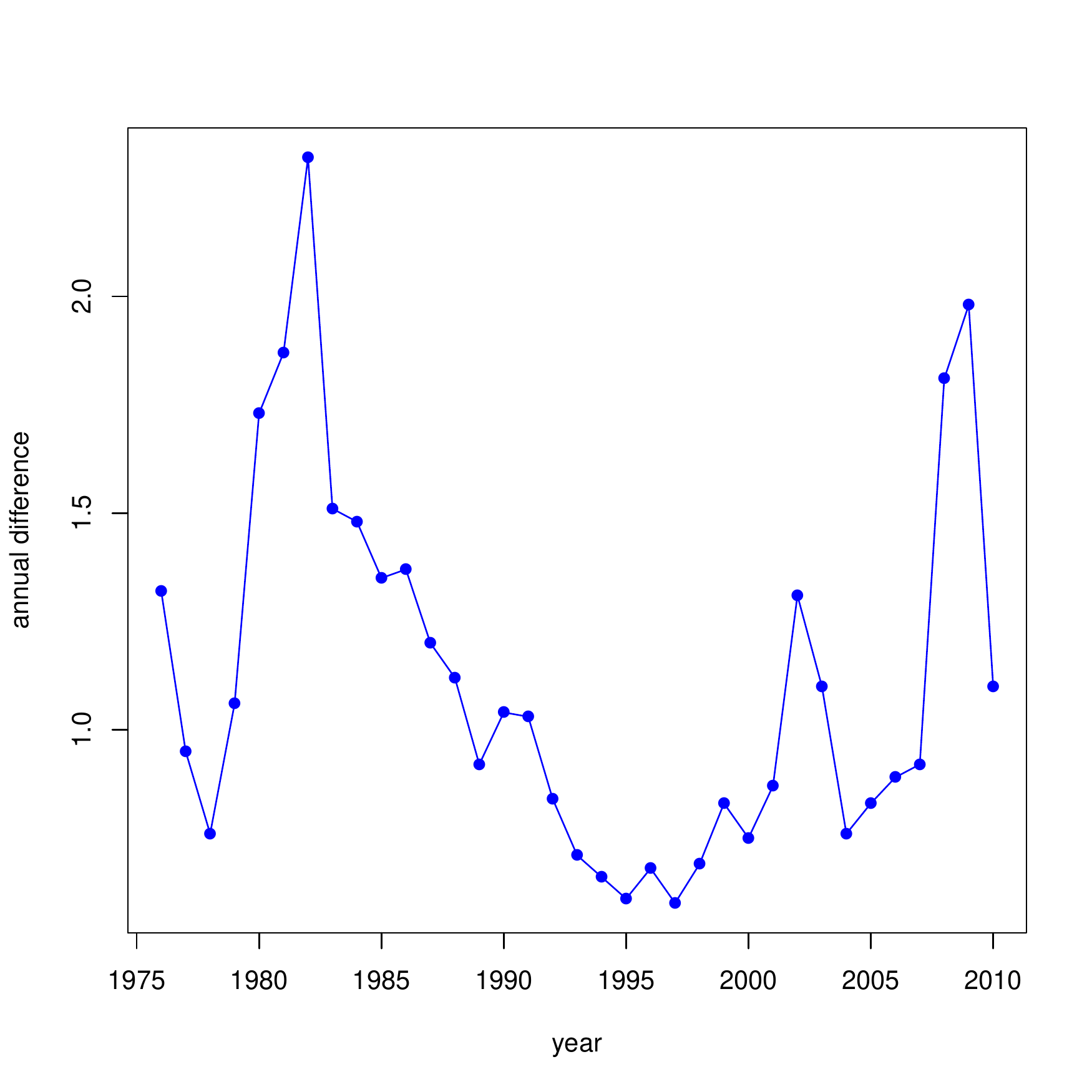}
\end{center}
\caption{Time series plot of difference in bond yields}
\label{DiffBATsplot}
\end{figure}

\newpage

\bibliographystyle{chicagoa}
\bibliography{Unitroot}

\begin{thebibliography}{}

\bibitem[\protect\citeauthoryear{A.~I.~McLeod and Zhang}{A.~I.~McLeod and
  Zhang}{2011}]{mleur}
A.~I.~McLeod, H.~Y. and Y.~Zhang (2011).
\newblock {\em Fmleur: Maximum Likelihood Unit Root Test}.
\newblock R package version 1.0-1.

\bibitem[\protect\citeauthoryear{Andrews and Stafford}{Andrews and
  Stafford}{2000}]{AndStaf}
Andrews, D.~F. and J.~E. Stafford (2000).
\newblock {\em Symbolic Computation for Statistical Inference}.
\newblock Oxford.


\bibitem[\protect\citeauthoryear{Billingsley}{Billingsley}{1999}]{Billingsley99}
Billingsley, P. (1999).
\newblock {\em Convergence of Probability Measures}.
\newblock New York: John Wiley \& Sons, Inc.


\bibitem[\protect\citeauthoryear{Brockwell and Davis}{Brockwell and
  Davis}{1987}]{Brock87}
Brockwell, P.~J. and R.~A. Davis (1987).
\newblock {\em Time Series: Theory and Methods}.
\newblock New York: Springer-Verlag.


\bibitem[\protect\citeauthoryear{Dent and Min}{Dent and Min}{1978}]{DentMin78}
Dent, W. and A.~S. Min (1978).
\newblock A monto carlo study of autoregressive integrated moving average
  processes.
\newblock {\em Journal of Econometrics\/}~{\em 7}, 23--55.


\bibitem[\protect\citeauthoryear{Dickey and Fuller}{Dickey and
  Fuller}{1979}]{DickeyFuller79}
Dickey, A.~D. and W.~A. Fuller (1979).
\newblock Distribution of the estimators for autoregressive time series with a
  unit root.
\newblock {\em Journal of the American Statistical Assiciation\/}~{\em 74},
  427--431.


\bibitem[\protect\citeauthoryear{Enders}{Enders}{2010}]{Enders2010}
Enders, W. (2010).
\newblock {\em Applied Econometric Time Series\/} (3rd ed.).
\newblock New York: John Wiley and Sons.


\bibitem[\protect\citeauthoryear{Fuller}{Fuller}{1996}]{Fuller96}
Fuller, W.~A. (1996).
\newblock {\em Introduction to Statistical Time Series}.
\newblock New York: Wiley.


\bibitem[\protect\citeauthoryear{Gonzalez-Farias and Dickey}{Gonzalez-Farias
  and Dickey}{1999}]{GF99}
Gonzalez-Farias, G.~M. and D.~A. Dickey (1999).
\newblock Unit root test: An unconditional maximum likelihood approach.
\newblock {\em Boletin de la Sociedad Matematica Mexicana\/}~{\em 5}, 199--221.


\bibitem[\protect\citeauthoryear{Hasza}{Hasza}{1980}]{Hasza80}
Hasza, D.~P. (1980).
\newblock A note on the maximum likelihood estimation for the first-order
  autoregressive processes.
\newblock {\em ommunications in Statistics: Theory and Methods\/}~{\em 13},
  1411--15.


\bibitem[\protect\citeauthoryear{Iacus}{Iacus}{2008}]{Iacus2008}
Iacus, S.~M. (2008).
\newblock {\em Simulation and Inference for Stochastic Differential Equations:
  With R Examples}.
\newblock New York: Springer Science+Business Media, LLC.


\bibitem[\protect\citeauthoryear{MacKinnon}{MacKinnon}{2002}]{MacKinnon02}
MacKinnon, J. (2002).
\newblock {\em The International Series in Engineering and Computer Science},
  Volume 541, Chapter Computing Numerical Distribution Functions in
  Econometrics, pp.\  455--471.
\newblock Springer-Verlag.

\bibitem[\protect\citeauthoryear{Mc{L}eod and Zhang}{Mc{L}eod and
  Zhang}{2008}]{McLeod2008}
Mc{L}eod, A. and Y.~Zhang (2008).
\newblock Faster arma maximum likelihood estimation.
\newblock {\em Computational Statistics and Data Analysis\/}~{\em 52\/}(4),
  2166--2176.


\bibitem[\protect\citeauthoryear{Minozzo and Azzalini}{Minozzo and
  Azzalini}{1993}]{MinAzz93}
Minozzo, M. and A.~Azzalini (1993).
\newblock On the unimodality of the exact likelihood function for normal ar(2)
  series.
\newblock {\em Journal of Time Series Analysis\/}~{\em 14}, 497--510.


\bibitem[\protect\citeauthoryear{Pantula, Gonzalez-Farias, and Fuller}{Pantula
  et~al.}{1994}]{Pantula1994}
Pantula, S.~G., G.~Gonzalez-Farias, and W.~A. Fuller (1994).
\newblock A comparison of unit-root test criteria.
\newblock {\em Journal of Business \& Economic Statistics\/}~{\em 12\/}(4),
  449--459.


\bibitem[\protect\citeauthoryear{Patterson}{Patterson}{2010}]{Patterson2010}
Patterson, K. (2010).
\newblock {\em A Primer for Unit Root Testing}.
\newblock London: Palgrave.


\bibitem[\protect\citeauthoryear{Pfaff}{Pfaff}{2006}]{PfaffBook2006}
Pfaff, B. (2006).
\newblock {\em Analysis of Integrated and Cointegrated Time Series with R}.
\newblock New York: Springer.


\bibitem[\protect\citeauthoryear{Pfaff}{Pfaff}{2010}]{urca}
Pfaff, B. (2010).
\newblock {\em urca: Unit root and cointegration tests for time series data}.
\newblock R package version 1.2-5.

\bibitem[\protect\citeauthoryear{Shin and Fuller}{Shin and
  Fuller}{1998}]{Shin1998}
Shin, D.~W. and W.~Fuller (1998).
\newblock Unit root tests based on unconditional maximum likelihood estimation
  for the autoregressive moving average.
\newblock {\em Journal of Time Series Analysis\/}~{\em 19\/}(5), 591–--599.


\bibitem[\protect\citeauthoryear{Smith and Field}{Smith and
  Field}{2001}]{SmithField}
Smith, B. and C.~Field (2001).
\newblock Symbolic cumulant calculations for frequency domain time series.
\newblock {\em Statistics and Computing\/}~{\em 11}, 75--82.


\bibitem[\protect\citeauthoryear{White}{White}{1961}]{White61}
White, J.~S. (1961).
\newblock Asymptotic expansions for the mean and variance of the serial
  correlation coefficient.
\newblock {\em Biometrika\/}~{\em 48}, 85--94.


\bibitem[\protect\citeauthoryear{Wolfram}{Wolfram}{1999}]{Math}
Wolfram, S. (1999).
\newblock {\em The Mathematica Book}.
\newblock Cambridge: Cambridge University Press.


\bibitem[\protect\citeauthoryear{Zhang and McLeod}{Zhang and
  McLeod}{2006}]{ZhangMcLeod06}
Zhang, Y. and A.~I. McLeod (2006).
\newblock Computer algebra derivation of the bias of linear estimators of
  autoregressive models.
\newblock {\em Journal of Time Series Analysis\/}~{\em 27}, 157--165.


\end{thebibliography}
\end{document}